\documentclass{article}
\usepackage{amsmath, amsthm}
\usepackage{amssymb}
\usepackage{textcomp}
\textwidth 12cm \textheight 18.5cm

%%% Theorem Like Envirouments

\newtheoremstyle{theorem}%name
  {10pt}          % space above
  {10pt}  % space below
  {\sl}  % bofy font
  {\parindent}     % ident - empty=no indent,  \parindent= paragraph indent
  {\bf}  % thm head font
  {. }    % punctuation after thm head
  { }    % space after thm head: `` ``=normal \newline=linebreak
  {}     % thm head specification
\theoremstyle{theorem}
\newtheorem{theorem}{Theorem}[section]

\newtheorem{lemma}[theorem]{Lemma}
\newtheorem{remark}[theorem]{Remark}
\newtheorem{proposition}[theorem]{Proposition}
\newtheorem{example}[theorem]{Example}
\newtheoremstyle{defi}%name
  {10pt}          % space above
  {10pt}  % space below
  {\rm}  % bofy font
  {\parindent}     % ident - empty=no indent,  \parindent= paragraph indent
  {\bf}  % thm head font
  {. }    % punctuation after thm head
  { }    % space after thm head: `` ``=normal \newline=linebreak
  {}     % thm head specification
\theoremstyle{defi}
\newtheorem{definition}[theorem]{Definition}

%%%% Local Definitions start here

%%%% End of Local Definitions

\begin{document}

\date{}
\author{Thabet ABDELJAWAD\footnote{\c{C}ankaya University, Department of
Mathematics, 06530, Ankara, Turkey} , Duran T\"{U}RKO\~{G}LU \footnote{ Department of Mathematics, Faculty of Science and Arts, Gazi University, 06500, Ankara-Turkey. dturkoglu@gazi.edu.tr.}}

\title{Locally Convex Valued Rectangular Metric Spaces and The Kannan's Fixed Point Theorem }
\maketitle

\begin{abstract}
Rectangular TVS-cone   metric spaces are introduced and Kannan's fixed point theorem is proved in these spaces. Two approaches are followed for the proof. At first we prove the theorem by a direct method using the structure of the space itself. Secondly, we use the nonlinear scalarization used recently by Wei-Shih Du in [A note on cone metric fixed point theory and its equivalence, {Nonlinear Analysis},72(5),2259-2261 (2010).] to prove the equivalence of the Banach contraction principle in cone metric spaces and usual metric spaces. The proof is done without any normality assumption on the cone of the locally convex topological vector space, and hence generalizing several previously obtained results.
\end{abstract}

\emph{Keywords}: TVS-cone metric space, rectangular TVS-cone metric space, Kannan's fixed point theorem.

\section{Introduction and Preliminaries } \label{s:1}

Many authors attempted to generalize the notion of the metric space.
 In 2007, Huang and Zhang \cite{HZ} announced the notion of cone metric spaces
(CMS) by using the same idea, namely, by replacing real numbers with an ordering real Banach space.
In that paper, they also discussed some properties of convergence of sequences and proved the
fixed point theorems of contractive mapping for cone metric spaces:
Any mapping $T$ of a complete cone metric space $X$ into itself that
satisfies, for some $0\leq k<1$, the inequality $d(Tx,Ty)\leq k
d(x,y)$, for all $x,y \in X$, has a unique fixed point. Lately, many
results on fixed point theorems have been extended to cone metric
spaces  (see e.g.\cite{HZ},\cite{RH},\cite{Ishak},\cite{TAA},\cite{TAA2},\cite{K},\cite{T},\cite{AK}, \cite{TA}). For Kannan's fixed point theorem in rectangular metric spaces (R-MS) we refer to \cite{Das} and for the contraction principle and Kannan's fixed point theorem in rectangular cone metric space (R-CMS) see \cite{Akbar} and \cite{Beg}, respectively.

Recently, Du \cite{D_2009} gave the definition of generalized cone
metric space, namely topological vector space-cone metric space
(TVS-CMS), and proved some fixed point theorems on that class.
The author showed also that Banach contraction principles in
usual metric spaces and in TVS-CMS are equivalent.

In this manuscript, we first introduce the notion of rectangular TVS-cone metric spaces (R-TVS-CMS) and then prove Kannan's fixed point theorem in this class of spaces.  The obtained result generalizes those in \cite{Beg} and \cite{Das} and hence the classical Kannan's fixed point theorem. Two proofs are presented and the proofs are done without any normality assumption.

Throughout this paper, $(E,S)$ stands for real Hausdorff locally convex topological vector space (t.v.s.) with $S$ its generating system of seminorms.
A non-empty subset $P$ of $E$ is called  cone if $P+P \subset P$, $\lambda P \subset P$
for $\lambda \geq 0$ and  $P \cap (-P) =\{0\}$. The cone $P$ will be assumed to be closed and has nonempty interior as well.
For a given cone $P$, one can define  a partial ordering (denoted by $\leq$ or $\leq_P$) with respect to $P$
by $x\leq y$ if and only if $y-x \in P$. The notation $x<y$ indicates that $x\leq y$ and $x\neq y$ while
$x<<y$ will show $y-x\in intP$, where $intP$ denotes the interior of $P$.
Continuity of the algebric operations in a topological vector space and the properties of the cone imply the relations:
$$intP+intP\subseteq intP ~\emph{and}~\lambda intP \subseteq intP~(\lambda > 0).$$
We appeal to these relations in the following.

\begin{definition} \cite{Ali}
A cone $P$ of a topological vector space $(X,\tau)$ is said to be normal  whenever $\tau$ has a base of zero consisting of $P-$ full sets. Where a subset of $A$ of an order vector space via a cone $P$ is said to be $P-$full if for each $x, y \in A$ we have $\{a \in E: x\leq a \leq y\}\subset A$.
\end{definition}
\begin{theorem} \cite{Ali}
(a) A cone $P$ of a topological vector space $(X,\tau)$ is normal if and only if whenever $\{x_\alpha\}$ and $\{y_\alpha\}$, $\alpha \in \Delta$ are two nets in $X$ with $0\leq x_\alpha \leq y_\alpha$ for each $\alpha \in \Delta$ and $y_\alpha \rightarrow 0$, then $x_\alpha \rightarrow 0$.

(b) The cone of an ordered locally convex space $(X,\tau)$ is normal if and only if $\tau$ is generated by a family of monotone $\tau-$ continuous seminorms. Where a seminorm $q$ on $X$ is called monotone if $q(x)\leq q(y)$ for all $x, y \in X$ with $0\leq x \leq y$.
\end{theorem}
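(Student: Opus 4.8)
The plan is to treat the two biconditionals separately, using the \emph{full hull} of a set as the central construction. For $A\subseteq X$ I write $[A]=(A+P)\cap(A-P)$; one checks at once that $a\in[A]$ precisely when $x\le a\le y$ for some $x,y\in A$, so $[A]$ is the smallest $P$-full set containing $A$, and $[A]$ is a neighbourhood of $0$ whenever $A$ is. Note also that $[A]$ is convex when $A$ is (since $P$ is a convex cone, both $A+P$ and $A-P$ are convex), and that $-[A]=[-A]$. The upshot I will use repeatedly is that $P$ is normal if and only if for every neighbourhood $W$ of $0$ there is a neighbourhood $U$ with $[U]\subseteq W$, because the sets $[U]$ are exactly the minimal $P$-full neighbourhoods.

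For part (a), the forward implication is immediate: assuming $P$ normal, fix a base $\mathcal U$ of $P$-full neighbourhoods of $0$. Given nets with $0\le x_\alpha\le y_\alpha$ and $y_\alpha\to 0$, and any $U\in\mathcal U$, eventually $y_\alpha\in U$; since $0\in U$ too and $U$ is $P$-full, $x_\alpha\in\{a:0\le a\le y_\alpha\}\subseteq U$, whence $x_\alpha\to 0$. For the converse I would argue by contraposition. If $P$ is not normal there is, by the remark above, a neighbourhood $W$ of $0$ with $[U]\not\subseteq W$ for every neighbourhood $U$. Directing a neighbourhood base $\{U\}$ by reverse inclusion, choose for each $U$ points $x_U,y_U\in U$ and $a_U$ with $x_U\le a_U\le y_U$ but $a_U\notin W$. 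Put $u_U=a_U-x_U$ and $v_U=y_U-x_U$, so $0\le u_U\le v_U$ with $v_U\in U-U\to 0$; yet $u_U\not\to 0$, since otherwise $a_U=u_U+x_U\to 0$ would force $a_U\in W$ eventually. This violates the net condition, completing the contrapositive.

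For part (b), the implication ``monotone seminorms $\Rightarrow$ normal'' is cleanest \emph{via} part (a): if $\tau$ is generated by monotone continuous seminorms $\{q_i\}$ and $0\le x_\alpha\le y_\alpha$ with $y_\alpha\to 0$, then monotonicity gives $q_i(x_\alpha)\le q_i(y_\alpha)\to 0$ for each $i$, so $x_\alpha\to 0$ and $P$ is normal by (a). For the reverse implication I would manufacture the seminorms as Minkowski gauges. Since $\tau$ is locally convex I start from a base of convex balanced neighbourhoods $W$; by normality each contains a neighbourhood $U$ with $[U]\subseteq W$, and $[U]$ is convex and $P$-full. The gauge of such a set is monotone: if $0\le x\le y$ and $y\in t[U]$ then $0\le x/t\le y/t$ with $y/t,0\in[U]$, so $P$-fullness forces $x/t\in[U]$, giving $p_{[U]}(x)\le p_{[U]}(y)$.

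The main obstacle is precisely this last construction: $[U]$ is convex and $P$-full but in general not balanced, so its gauge need not be a seminorm. To repair this I would symmetrise, replacing $[U]$ by $V=[U]\cap(-[U])$. Reflections and intersections of $P$-full sets are $P$-full (using $-[U]=[-U]$), so $V$ is convex, symmetric and $P$-full; over the real field a convex symmetric neighbourhood is balanced, hence $p_V$ is a genuine seminorm, monotone and $\tau$-continuous by the gauge argument above, and since $V\subseteq W$ the family $\{p_V\}$ generates exactly $\tau$. The routine points that must still be nailed down are that full hulls and their symmetrisations preserve both convexity and $P$-fullness, and that the resulting gauges are continuous and generate $\tau$ rather than a coarser topology.
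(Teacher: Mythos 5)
Your argument is essentially correct, but be aware that there is nothing in the paper to compare it against: the statement is quoted without proof from the reference [Aliprantis--Tourky, \emph{Cones and Duality}], so your proof stands or falls on its own merits. It stands. The full-hull characterisation of normality ($P$ is normal if and only if every neighbourhood $W$ of $0$ contains $[U]$ for some neighbourhood $U$), the contrapositive net construction in (a) (where indeed $x_U\to 0$ and $v_U\in U-U\to 0$ by continuity of subtraction, so $u_U\to 0$ would force $a_U\to 0$, contradicting $a_U\notin W$), and the gauge argument in (b) are all sound, and the ``routine points'' you defer do go through. Two small repairs are worth recording. First, in (b) you assert that $[U]$ is convex; this needs $U$ itself convex, which you get by shrinking: normality gives $[U]\subseteq W$, local convexity gives a convex $U'\subseteq U$, and monotonicity of the hull gives $[U']\subseteq[U]\subseteq W$. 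Second, your symmetrisation $V=[U]\cap(-[U])$ works but is unnecessary: if you choose $U'$ convex \emph{and balanced} (such neighbourhoods form a base in a locally convex space), then $-[U']=[-U']=[U']$, so $[U']$ is convex, symmetric and contains $0$, hence balanced over $\mathbb{R}$, and its gauge is already a monotone $\tau$-continuous seminorm. With either fix, the family of gauges generates exactly $\tau$: each gauge is $\tau$-continuous, and $\{x: p_V(x)<1\}\subseteq V\subseteq W$ for each basic $W$, so the seminorm topology is neither coarser nor finer than $\tau$.
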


In particular, if   $P$ is a cone of a real Banach space $E$, then it  is called
\textit{normal} if there is a number $K \geq 1$ such that for
all $x,y \in E$:\  $
0\leq x \leq y\Rightarrow \|x\|\leq K \|y\|.$ The least positive integer $K$, satisfying this inequality,
is called the normal constant of $P$.
Also, $P$ is said to be  \textit{regular} if every increasing sequence which is bounded
from above is convergent. That is, if $\{x_n\}_{n\geq 1}$ is a sequence such
that $x_1 \leq x_2\leq \cdots\leq y$ for some $y \in E$, then there is $x \in E$
such that $\lim_{n\rightarrow\infty} \|x_n-x\|=0$. For more details about cones in locally convex topological vector spaces we may refer the reader to \cite{Ali}.

\

\begin{definition} (See \cite{CHY}, \cite{D_2008}, \cite{D_2009})
For $e \in intP$, the nonlinear scalarization function  $\xi_e:E\rightarrow \mathbb R$ is defined by
\[\xi_e(y)=\inf\{t \in \mathbb R: y \in te-P\}, \ \mbox{for all} \ y \in E.\]
\end{definition}
\begin{lemma} (See \cite{CHY}, \cite{D_2008}, \cite{D_2009})
For each $t\in \mathbb R$ and $y \in E$, the following are satisfied:
\begin{itemize}
\item[$(i)$] $\xi_e(y)\leq t\Leftrightarrow  y \in te-P$,
\item[$(ii)$] $\xi_e(y)> t\Leftrightarrow  y \notin te-P$,
\item[$(iii)$] $\xi_e(y)\geq t\Leftrightarrow  y \notin te-intP$,
\item[$(iv)$] $\xi_e(y)< t\Leftrightarrow  y \in te-intP$,
\item[$(v)$] $\xi_e(y)$ is positively homogeneous and continuous  on $E$,
\item[$(vi)$] if $y_1\in y_2+P$, then $\xi_e(y_2)\leq \xi_e(y_1)$,
\item[$(vii)$] $\xi_e(y_1+y_2)\leq \xi_e(y_1)+\xi_e(y_2)$, for all $y_1,y_2 \in E$.
\end{itemize}
\label{lemma_scalarization}
\end{lemma}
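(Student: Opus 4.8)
The plan is to treat all seven items as consequences of a single structural fact: for each fixed $y \in E$ the set $A(y):=\{t\in\mathbb{R}: y\in te-P\}=\{t\in\mathbb{R}: te-y\in P\}$ is a closed half-line $[\xi_e(y),\infty)$ with finite left endpoint. First I would verify that $\xi_e(y)$ is a well-defined real number. Nonemptiness of $A(y)$ follows because, for $t>0$, $te-y\in P$ iff $t^{-1}(te-y)=e-t^{-1}y\in P$, and since $e-t^{-1}y\to e\in intP$ as $t\to+\infty$, openness of $intP$ places $e-t^{-1}y$ in $intP\subseteq P$ for large $t$. Boundedness below follows because if $te-y\in P$ held for a sequence $t\to-\infty$, then writing $\mu=-t$ gives $-e-\mu^{-1}y\in P$ with $-e-\mu^{-1}y\to-e$, so closedness of $P$ would force $-e\in P$; together with $e\in P$ this yields $e\in P\cap(-P)=\{0\}$, contradicting $e\in intP$.

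Item $(i)$ is the heart of the argument. I would first show $A(y)$ is upward closed: if $te-y\in P$ and $s>t$, then $se-y=(te-y)+(s-t)e\in P+intP\subseteq P$, using $e\in intP$ and the stated relation $\lambda\,intP\subseteq intP$. Closedness of $A(y)$ comes from closedness of $P$: choosing $t_n\downarrow\xi_e(y)$ with $t_ne-y\in P$ and passing to the limit gives $\xi_e(y)e-y\in P$. Hence $A(y)=[\xi_e(y),\infty)$, which is exactly the equivalence $\xi_e(y)\le t\Leftrightarrow y\in te-P$. Item $(ii)$ is then the plain logical negation of $(i)$.

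For $(iii)$ and $(iv)$ I would run the parallel analysis for the open set $B(y):=\{t: te-y\in intP\}$, aiming to prove $B(y)=(\xi_e(y),\infty)$, whence $y\notin te-intP\Leftrightarrow t\le\xi_e(y)$. The inclusion $\supseteq$ uses $se-y=(\xi_e(y)e-y)+(s-\xi_e(y))e\in P+intP\subseteq intP$ for $s>\xi_e(y)$, where I would invoke $P+intP\subseteq intP$, itself derived from translation invariance of the topology (if $q+U\subseteq P$ for some open $U\ni0$, then $p+q+U\subseteq p+P\subseteq P$ for $p\in P$). For $\subseteq$, the delicate point, and what I expect to be the main obstacle, is showing the left endpoint is excluded, i.e. $\xi_e(y)e-y\notin intP$: if it were interior, continuity of $s\mapsto se-y$ would place $se-y$ in $intP\subseteq P$ for some $s<\xi_e(y)$, contradicting the minimality of $\xi_e(y)$ in $A(y)$. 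This boundary argument is the only place requiring care beyond formal manipulation; $(iv)$ is again the negation of $(iii)$.

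The remaining items are short. Positive homogeneity in $(v)$ holds because $P$ is a cone, so $A(\lambda y)=\lambda A(y)$ for $\lambda>0$ and hence $\xi_e(\lambda y)=\lambda\xi_e(y)$. Continuity in $(v)$ I would extract from $(ii)$ and $(iv)$: the set $\{y:\xi_e(y)>t\}=E\setminus(te-P)$ is open since $P$ is closed, giving lower semicontinuity, while $\{y:\xi_e(y)<t\}=te-intP$ is open, giving upper semicontinuity, so $\xi_e$ is continuous. Monotonicity $(vi)$ is immediate from $(i)$: if $y_1-y_2\in P$ and $t=\xi_e(y_1)$, then $te-y_2=(te-y_1)+(y_1-y_2)\in P+P\subseteq P$, so $\xi_e(y_2)\le t$. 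Subadditivity $(vii)$ uses the same idea: adding $\xi_e(y_1)e-y_1\in P$ and $\xi_e(y_2)e-y_2\in P$ gives $(\xi_e(y_1)+\xi_e(y_2))e-(y_1+y_2)\in P$, so $\xi_e(y_1+y_2)\le\xi_e(y_1)+\xi_e(y_2)$ by $(i)$.
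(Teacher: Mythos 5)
Your proof is correct, but there is nothing in the paper to compare it against: the paper states this lemma with citations to Chen--Huang--Yang and to Du and never proves it, so you have supplied an argument that the paper leaves entirely to the literature. What you give is essentially the standard analysis of the nonlinear scalarization (Gerstewitz) function found in those references, organized around the structural fact that $A(y)=\{t\in\mathbb R: y\in te-P\}$ is the closed half-line $[\xi_e(y),\infty)$ while $B(y)=\{t\in\mathbb R: y\in te-intP\}$ is the open half-line $(\xi_e(y),\infty)$; items $(i)$--$(iv)$ are then mere restatements, and $(v)$--$(vii)$ follow by scaling, semicontinuity, and addition inside $P$. Every ingredient you invoke is available in the paper's setting: closedness of $P$, $P+P\subseteq P$, $\lambda\, intP\subseteq intP$ for $\lambda>0$, pointedness $P\cap(-P)=\{0\}$, and continuity of the algebraic operations; your auxiliary claim $P+intP\subseteq intP$ is proved correctly by the translation argument, and the delicate left-endpoint exclusion $\xi_e(y)e-y\notin intP$ is handled properly via continuity of $s\mapsto se-y$. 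One point you should make explicit: in the boundedness-below step the contradiction you actually reach is $e=0$, and this contradicts $e\in intP$ only because $0\notin intP$, which itself needs a one-line justification (if $0\in intP$ then $P$ is absorbing, so for every $x\in E$ some $\gamma>0$ gives $\gamma x\in P\cap(-P)=\{0\}$, forcing $E=\{0\}$). That is a cosmetic omission rather than a gap; with it noted, your proof is complete and self-contained.
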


\begin{definition} Let $X$ be a non-empty set and $E$ as usual a Hausdorff locally convex topological space. Suppose  a vector-valued function $p:X\times X\rightarrow E$ satisfies:
\begin{enumerate}
\item[$(M1)$] $0\leq p(x,y)$ for all $x,y \in X$,
\item[$(M2)$] $p(x,y)=0$ if and only if $x=y$,
\item[$(M3)$] $p(x,y)=p(y,x)$ for all $x,y \in X$
\item[$(M4)$] $p(x,y) \leq p(x,z)+p(z,y)$, for all $x,y,z \in X$.
\end{enumerate}
Then,  $p$ is called TVS-cone metric on $X$, and the pair $(X,p)$ is called
a  TVS-cone metric space (in short, TVS-CMS).
\end{definition}

Note that in \cite{HZ}, the authors considered $E$ as a real Banach space
in the definition of TVS-CMS. Thus, a cone metric space (in short, CMS) in the sense of Huang and
Zhang \cite{HZ} is a special case of TVS-CMS.

\begin{lemma} (See \cite{D_2009})
Let $(X,p)$ be a TVS-CMS. Then, $d_p:X \times X\rightarrow [0,\infty)$ defined by $d_p=\xi_e\circ p$ is a metric.
\label{lemma_usual_metric}
\end{lemma}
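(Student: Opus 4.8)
The plan is to verify directly that $d_p=\xi_e\circ p$ satisfies the four axioms of a metric, drawing on the properties of the scalarization function collected in Lemma~\ref{lemma_scalarization} together with the cone axioms (M1)--(M4) defining $p$. Since Lemma~\ref{lemma_scalarization}(v) guarantees $\xi_e$ is real-valued, the only thing to check about the codomain $[0,\infty)$ is nonnegativity.

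First I would establish nonnegativity. By (M1) we have $p(x,y)\in P$. From $P\cap(-P)=\{0\}$ and $intP\subseteq P$ one gets $P\cap(-intP)\subseteq P\cap(-P)=\{0\}$; moreover $0\notin intP$, for otherwise $P$ would contain a neighborhood of zero, forcing $P=E$ and contradicting properness of the cone. Hence in fact $P\cap(-intP)=\emptyset$, so $p(x,y)\notin -intP=0\cdot e-intP$, and Lemma~\ref{lemma_scalarization}(iii) with $t=0$ yields $\xi_e(p(x,y))\geq 0$.

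Next I would handle the vanishing condition. If $x=y$ then $p(x,y)=0$ by (M2), and positive homogeneity (Lemma~\ref{lemma_scalarization}(v)) gives $\xi_e(0)=0$, so $d_p(x,x)=0$. Conversely, if $d_p(x,y)=\xi_e(p(x,y))=0$, then in particular $\xi_e(p(x,y))\leq 0$, so Lemma~\ref{lemma_scalarization}(i) with $t=0$ gives $p(x,y)\in -P$; combined with $p(x,y)\in P$ from (M1) and $P\cap(-P)=\{0\}$, this forces $p(x,y)=0$, whence $x=y$ by (M2). Symmetry is immediate: by (M3), $d_p(x,y)=\xi_e(p(x,y))=\xi_e(p(y,x))=d_p(y,x)$.

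Finally, the triangle inequality is where the two order-theoretic properties of $\xi_e$ combine. From (M4) we have $p(x,y)\leq p(x,z)+p(z,y)$, i.e.\ $p(x,z)+p(z,y)\in p(x,y)+P$, so the monotonicity property Lemma~\ref{lemma_scalarization}(vi) gives $\xi_e(p(x,y))\leq \xi_e\bigl(p(x,z)+p(z,y)\bigr)$. Subadditivity, Lemma~\ref{lemma_scalarization}(vii), then yields $\xi_e\bigl(p(x,z)+p(z,y)\bigr)\leq \xi_e(p(x,z))+\xi_e(p(z,y))$, and chaining the two inequalities gives $d_p(x,y)\leq d_p(x,z)+d_p(z,y)$. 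I expect the only genuinely delicate point to be the nonnegativity and vanishing step, which hinges on the elementary but essential fact that a proper cone meets $-intP$ only in the empty set; once that is in place, the remaining axioms follow mechanically from properties (i), (v), (vi), (vii) of the scalarization and the symmetry axiom (M3).
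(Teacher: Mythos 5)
Your proof is correct and takes essentially the same route as the paper: the paper itself states this lemma without proof (citing Du), but its proof of the rectangular analogue, Theorem \ref{rectangular}, is exactly your direct verification — nonnegativity and the vanishing condition from $(M1)$, $(M2)$, $P\cap(-P)=\{0\}$ and Lemma \ref{lemma_scalarization}, symmetry from $(M3)$, and the triangle inequality by chaining Lemma \ref{lemma_scalarization}$(vi)$ and $(vii)$ with $(M4)$. Your write-up is in fact slightly more careful than the paper's, since you justify the step $0\notin intP$ (hence $P\cap(-intP)=\emptyset$) that the paper leaves implicit.
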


\begin{remark}
Since a cone metric space $(X,p)$ in the sense of Huang and Zhang \cite{HZ}, is a special case of TVS-CMS, then
$d_p:X \times X\rightarrow [0,\infty)$ defined by $d_p=\xi_e\circ p$ is also a metric.
\label{remark_CMS_usual_ms}
\end{remark}

\begin{definition}(See \cite{D_2009})
Let $(X,p)$ be a TVS-CMS, $x\in X$ and $\{x_n\}_{n=1}^{\infty}$ a sequence in $X$.
\label{definition_convergence}
\begin{itemize}
\item[($i$)] $\{x_n\}_{n=1}^{\infty}$ TVS-cone converges to $x\in X$ whenever
for every $0<<c\in E$, there is a natural number $M$ such that $p(x_n,x)<<c$
for all $n\geq M$ and denoted by $cone-\lim_{n\rightarrow \infty}x_n=x$ (or
$x_n\stackrel{cone}{\rightarrow} x$ as $n\rightarrow \infty$),
\item[($ii$)] $\{x_n\}_{n=1}^{\infty}$ TVS-cone Cauchy sequence in $(X,p)$ whenever
for every $0<<c\in E$, there is a natural number $M$ such that $p(x_n,x_m)<<c$
for all $n,m \geq M$,
\item[($iii$)]  $(X,p)$ is TVS-cone complete if every sequence TVS-cone Cauchy sequence  in $X$ is a TVS-cone
convergent.
\end{itemize}
\end{definition}

\begin{lemma} (See \cite{D_2009})
Let $(X,p)$ be a TVS-CMS, $x\in X$ and $\{x_n\}_{n=1}^{\infty}$ a sequence in $X$.
Set $d_p=\xi_e \circ p$. Then the following statements hold:
\begin{itemize}
\item[($i$)] If $\{x_n\}_{n=1}^{\infty}$ converges to $x$ in TVS-CMS $(X,p)$, then
$d_p(x_n,x)\rightarrow 0$ as $n\rightarrow \infty,$
\item[($ii$)] If $\{x_n\}_{n=1}^{\infty}$ is a Cauchy sequence in  TVS-CMS $(X,p)$, then
$\{x_n\}_{n=1}^{\infty}$ is a Cauchy sequence (in usual sense) in $(X,d_p)$,
\item[($iii$)] If $(X,p)$ is a complete TVS-CMS, then $(X,d_p)$
is a complete metric space.
\end{itemize}
\label{lemma_eq_statements}
\end{lemma}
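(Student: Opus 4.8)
The plan is to exploit the scalarization equivalences in Lemma~\ref{lemma_scalarization}, above all part $(iv)$, namely $\xi_e(y)<t \Leftrightarrow y \in te - intP$, together with the fact that for a positive scalar $\epsilon$ the element $\epsilon e$ lies in $intP$ (since $e\in intP$ and $\lambda\, intP\subseteq intP$ for $\lambda>0$). The key observation in both directions is that choosing the ``test element'' $c=\epsilon e$ converts the cone relation $p(\cdot,\cdot)<<\epsilon e$ into the scalar bound $\xi_e(p(\cdot,\cdot))<\epsilon$, and conversely.

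For $(i)$, suppose $x_n\stackrel{cone}{\rightarrow}x$ and fix $\epsilon>0$. Since $0<<\epsilon e$, cone convergence furnishes $M$ with $p(x_n,x)<<\epsilon e$, i.e. $p(x_n,x)\in \epsilon e - intP$, for all $n\geq M$; by $(iv)$ this reads $d_p(x_n,x)=\xi_e(p(x_n,x))<\epsilon$. Together with $d_p\geq 0$ (valid because $d_p$ is a metric by Lemma~\ref{lemma_usual_metric}, or directly from $(vi)$ applied to $0\leq p(x_n,x)$) this gives $d_p(x_n,x)\to 0$. Part $(ii)$ is the same argument with $p(x_n,x_m)$ in place of $p(x_n,x)$: cone-Cauchyness yields $p(x_n,x_m)<<\epsilon e$ for $n,m\geq M$, and $(iv)$ upgrades this to $d_p(x_n,x_m)<\epsilon$, which is the $d_p$-Cauchy property.

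The substantive part is $(iii)$, and the main obstacle is to run the translation in reverse: from a $d_p$-Cauchy sequence one must recover cone-Cauchyness for an \emph{arbitrary} $0<<c$, not merely for the special elements $\epsilon e$. I would handle this as follows. Given $c\in intP$, the map $\delta\mapsto c-\delta e$ is continuous from $\mathbb R$ into $E$ and sends $0$ to $c\in intP$; since $intP$ is open, there exists $\delta>0$ with $c-\delta e\in intP$, that is $\delta e<<c$. Now let $\{x_n\}$ be $d_p$-Cauchy and pick $M$ with $d_p(x_n,x_m)<\delta$ for $n,m\geq M$. By $(iv)$ this means $p(x_n,x_m)\in\delta e-intP$, i.e. $\delta e-p(x_n,x_m)\in intP$, and then
\[
c-p(x_n,x_m)=(c-\delta e)+(\delta e-p(x_n,x_m))\in intP+intP\subseteq intP,
\]
so $p(x_n,x_m)<<c$. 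Hence $\{x_n\}$ is cone-Cauchy in $(X,p)$.

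Finally I would close the argument using completeness: by hypothesis $(X,p)$ is TVS-cone complete, so the cone-Cauchy sequence $\{x_n\}$ cone-converges to some $x\in X$, and part $(i)$ then gives $d_p(x_n,x)\to 0$. Thus every $d_p$-Cauchy sequence converges in $(X,d_p)$, which proves that $(X,d_p)$ is a complete metric space. The only place where the locally convex t.v.s. structure (and the absence of any normality hypothesis) is genuinely used is the openness of $intP$ in the construction of $\delta$; everything else is a formal manipulation of the equivalences $(iv)$ and $(vi)$.
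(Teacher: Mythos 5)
Your proof is correct. Note first that the paper itself offers no proof of Lemma \ref{lemma_eq_statements}: it is quoted from Du \cite{D_2009}, and the closest argument actually written out in the paper is the proof of its rectangular analogue, Lemma \ref{TH}. Measured against that proof, your treatment of $(i)$ and $(ii)$ is identical (scalarization via Lemma \ref{lemma_scalarization}$(iv)$ with the test elements $\epsilon e$), and your treatment of $(iii)$ follows the same skeleton: translate $d_p$-Cauchyness back into cone-Cauchyness, invoke TVS-cone completeness, and finish with $(i)$. The one genuine difference is how you produce $\delta>0$ with $\delta e \ll c$: the paper picks a seminorm $q\in S$ and $\delta>0$ such that $q(b)<\delta$ implies $b\ll c$, and then scales $e$ until $q(\epsilon e)<\delta$, i.e.\ it routes the argument through the locally convex (seminorm) structure, whereas you use only the openness of $intP$ and the continuity of the map $\delta\mapsto c-\delta e$. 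Your variant is marginally cleaner and shows the lemma holds in an arbitrary Hausdorff t.v.s., not just a locally convex one; otherwise the two arguments coincide, down to the final decomposition $c-p(x_n,x_m)=(c-\delta e)+(\delta e-p(x_n,x_m))\in intP+intP\subseteq intP$. Also correct, and worth making explicit as you did, is the observation that $(iii)$ requires the reverse translation even though $(i)$ and $(ii)$ are stated only as one-way implications; this is precisely why the paper's rectangular Lemma \ref{TH} upgrades them to equivalences.
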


\begin{proposition}(See \cite{D_2009})
Let $(X,p)$ be a complete TVS-CMS and $T:X\rightarrow X$ satisfy the contractive
condition
\begin{equation}
p(Tx,Ty)\leq k p(x,y)
\label{contraction}
\end{equation}
for all $x,y \in X$ and $0 \leq k <1$. Then, $T$ has a unique fixed
point in $X$. Moreover, for each $x\in X$, the iterative sequence
$\{T^nx\}_{n=1}^{\infty}$ converges to fixed point.
\label{Du_thm22}
\end{proposition}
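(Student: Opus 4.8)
The plan is to follow Du's scalarization route and reduce the statement to the classical Banach contraction principle on the induced metric space $(X,d_p)$, where $d_p=\xi_e\circ p$. By Lemma~\ref{lemma_usual_metric} this $d_p$ is a genuine metric, and by part $(iii)$ of Lemma~\ref{lemma_eq_statements} the space $(X,d_p)$ is complete whenever $(X,p)$ is complete. So the whole problem rests on transporting the cone contraction \eqref{contraction} to a scalar contraction for $d_p$.

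First I would carry out that transport. The hypothesis says $kp(x,y)-p(Tx,Ty)\in P$, i.e.\ $kp(x,y)\in p(Tx,Ty)+P$. Applying the monotonicity property $(vi)$ of the scalarization (Lemma~\ref{lemma_scalarization}) gives $\xi_e(p(Tx,Ty))\le \xi_e(kp(x,y))$, and the positive homogeneity in $(v)$ turns the right-hand side into $k\,\xi_e(p(x,y))$ since $k\ge 0$. Hence
\[
d_p(Tx,Ty)=\xi_e(p(Tx,Ty))\le k\,\xi_e(p(x,y))=k\,d_p(x,y),
\]
so $T$ is a Banach contraction on the complete metric space $(X,d_p)$. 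The classical contraction principle then yields a point $x^\ast$ with $Tx^\ast=x^\ast$; since being a fixed point is a purely set-theoretic condition and does not depend on which metric we use, $x^\ast$ is the unique fixed point of $T$ on $X$.

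It remains to upgrade the convergence of the iterates from the scalar metric $d_p$ to the TVS-cone sense of Definition~\ref{definition_convergence}, and this is where I expect the only real obstacle, because without normality the two notions of convergence need not coincide. I would sidestep this by arguing in the cone directly: iterating the contraction against the fixed point gives $p(T^n x,x^\ast)=p(T^n x,Tx^\ast)\le k\,p(T^{n-1}x,x^\ast)\le\cdots\le k^n p(x,x^\ast)$. Since $k^n\to 0$ and scalar multiplication is continuous, $k^n p(x,x^\ast)\to 0$ in $E$; for a fixed $c$ with $0<<c$ the set $intP$ is open and contains $c$, so $c-k^n p(x,x^\ast)\to c$ forces $c-k^n p(x,x^\ast)\in intP$ for all large $n$. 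Combining with $0\le p(T^n x,x^\ast)\le k^n p(x,x^\ast)$, so that $k^n p(x,x^\ast)-p(T^n x,x^\ast)\in P$, and the standard fact $intP+P\subseteq intP$, we get $c-p(T^n x,x^\ast)\in intP$, i.e.\ $p(T^n x,x^\ast)<<c$, which is exactly cone convergence.

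For completeness I would record a purely direct alternative that avoids the scalarization entirely: iterate to get $p(T^n x,T^{n+1}x)\le k^n p(x,Tx)$, sum the geometric series through $(M4)$ to obtain $p(T^n x,T^m x)\le \tfrac{k^n}{1-k}\,p(x,Tx)$ for $m>n$, deduce the cone-Cauchy property by the same $u_n\to 0\Rightarrow u_n<<c$ observation, pass to a limit $x^\ast$ by completeness, and then check $Tx^\ast=x^\ast$ and uniqueness using $P\cap(-P)=\{0\}$. The scalarization proof is shorter, but the main technical care in either route is the elementary cone fact that $u_n\to 0$ in $E$ implies $u_n<<c$ eventually, since this is the only place the interior and closedness of $P$ (rather than any normality of $P$) are actually used.
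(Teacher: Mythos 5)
Your proposal is correct and is essentially the proof the paper intends: the proposition is quoted from Du \cite{D_2009}, whose argument is precisely your scalarization reduction --- transport the contraction through $\xi_e$ via Lemma \ref{lemma_scalarization} $(v)$,$(vi)$, then apply the classical Banach principle in the complete metric space $(X,d_p)$ furnished by Lemma \ref{lemma_usual_metric} and Lemma \ref{lemma_eq_statements} $(iii)$. Your one theoretical worry --- that $d_p$-convergence might fail to imply cone convergence without normality --- is unfounded: that implication holds with no normality hypothesis (it is exactly the converse half of the paper's Lemma \ref{TH} $(i)$, proved using only Lemma \ref{lemma_scalarization} $(iv)$, and the argument works verbatim for a TVS-CMS), so your direct cone estimate $p(T^nx,x^\ast)\leq k^n p(x,x^\ast)$ together with the fact that $u_n\to 0$ implies $u_n\ll c$ eventually is a correct but unnecessary detour.
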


\begin{definition} \label{defn of rec TVS-cone}
Let $X$ be a nonempty set. A vector-valued function $p:X \times X \rightarrow E$ is said to be a rectangular $TVS-$ cone metric, if the following conditions hold:
\begin{itemize}
\item[($RC1$)] $0 \leq p(x,y)$ for all $x,y \in X$ and $p(x,y)=0$ if and only if $x=y$,
\item[($RC2$)]$p(x,y)=p(y,x)$ for all $x,y \in X$ ,
\item[($RC3$)]  $p(x,z)\leq p(x,y)+p(y,w)+p(w,z)$ for all $x,y \in X$ and for all distinct points $z,w \in X$ each of them different from $x$ and $y$.
\end{itemize}
The pair $(X,p)$ is then called a rectangular TVS-cone metric space (R-TVS-CMS). When $E$ is Banach space $(X,p)$ is called rectangular cone metric space (R-CMS). When $E=\mathbb{R}$ and $P=[0,\infty)$, $(X,p)$ is called rectangular  metric space (R-MS).
\end{definition}

Every TVS-CMS is R-TVS-CMS. However, the converse need not be true.
\begin{example} \label{not} ( \cite{Akbar}, see also \cite{Branciari})
Let $X=\{1,2,3,4\}$, $E=\mathbb{R}^2$ and $P=\{(x,y):x, y \geq 0\}$. Define $d:X \times X\rightarrow E$ as follows:
$$d(1,2)=d(2,1)=(3,6),~~d(2,3)=d(3,2)=d(1,3)=d(3,1)=(1,2),~~$$
$$d(1,4)=d(4,1)=d(2,4)=d(4,2)=d(3,4)=d(4,3)=(2,4).$$
Then $(X,d)$ is a R-CMS which is not a CMS, because
$$(3,6)=d(1,2)>d(1,3)+d(3,2)=(1,2)+(1,2)=(2,4).$$
\end{example}

\begin{definition} \label{conver}
Let $(X,p)$ be a rectangular TVS-cone metric space, $x \in X$ and $\{x_n\}$ a sequence in $X$.

 (i) $\{x_n\}$ is said to be a Cauchy sequence if for any $0\ll c$ there exists $n_0\in \mathbb{N}$ such that for all $m,n\in \mathbb{N}$, $n\geqslant n_0$, one has $p(x_n,x_{n+m})\ll c$.

 (ii)$\{x_n\}$ is said to converge to $x$ if for any $0\ll c$ there exists $n_0\in \mathbb{N}$ such that for all  $n\geqslant n_0$, one has $p(x_n,x)\ll c$.

 (iii) $(X,p)$ is called \textbf{complete} if every Cauchy sequence in $X$ is convergent in $X$.\\
\end{definition}

Let $T:X\rightarrow X$ be a mapping where $X$ is a R-TVS-CMS. For each $x\in X$, let
\begin{displaymath}
\textbf{O}(x)=\{x,Tx,T^2x,T^3x,\dotso\}.
\end{displaymath}

\begin{definition}
A cone metric space $X$ is said to be $T$-orbitally complete if every Cauchy sequence which is contained in $\textbf{O}(x)$ for some $x\in X$ converges in $X$.
\end{definition}

\section{Kannan's Fixed Point Theorem in R-TVS-CMS}
In order to realize the difference between TVS-CMS and R-TVS-CMS, we first prove Kannan's fixed point theorem in TVS-CMS.

\begin{theorem} \label{SA}
Let $(X,d)$ be a TVS-CMS and the mapping $T:X\rightarrow X$ satisfy the contractive condition
\begin{equation} \label{K}
d(Tx,Ty)\leqslant \beta [d(x,Tx)+d(y,Ty)]
\end{equation}
holds for all $x,y\in X$ where $\displaystyle 0<\beta<\frac{1}{2}$. If  $X$ is $T$-orbitally complete then $T$ has a unique fixed point in $X$.
\end{theorem}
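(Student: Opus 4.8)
The plan is to follow the classical Kannan scheme through Picard iteration, replacing each norm estimate by an order/topological argument, since no normality of the cone is assumed. Fix $x_0\in X$ and put $x_n=T^nx_0$, so that the whole sequence lies in $\textbf{O}(x_0)$. Applying \eqref{K} with $x=x_{n-1}$ and $y=x_n$ gives $d(x_n,x_{n+1})\leq \beta\,d(x_{n-1},x_n)+\beta\,d(x_n,x_{n+1})$; since $1-\beta>0$ and $P$ is a cone, the last term may be absorbed on the left to yield
\begin{equation*}
d(x_n,x_{n+1})\leq h\,d(x_{n-1},x_n),\qquad h:=\frac{\beta}{1-\beta}.
\end{equation*}
The hypothesis $0<\beta<\tfrac12$ forces $0<h<1$, and an easy induction gives $d(x_n,x_{n+1})\leq h^{n}d(x_0,x_1)$.

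I would then prove that $\{x_n\}$ is cone-Cauchy. For $m>n$, repeated use of (M4) together with the geometric bound gives
\begin{equation*}
d(x_n,x_m)\leq\big(h^{n}+h^{n+1}+\cdots+h^{m-1}\big)d(x_0,x_1)\leq\frac{h^{n}}{1-h}\,d(x_0,x_1).
\end{equation*}
Since scalar multiplication is continuous and $h^n\to0$, the vector $\tfrac{h^{n}}{1-h}d(x_0,x_1)$ tends to $0$ in $E$; hence, given any $c$ with $0\ll c$, the openness of $intP$ makes $\tfrac{h^{n}}{1-h}d(x_0,x_1)\ll c$ for all large $n$. Using that $a\leq b$ and $b\ll c$ imply $a\ll c$ (a consequence of $intP+P\subseteq intP$), we get $d(x_n,x_m)\ll c$, so $\{x_n\}$ is Cauchy. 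As the sequence is contained in $\textbf{O}(x_0)$, $T$-orbital completeness provides a limit $x^{*}\in X$ with $x_n\stackrel{cone}{\rightarrow}x^{*}$.

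To check that $x^{*}$ is fixed, I would insert $x_{n+1}=Tx_n$ and use (M4) and \eqref{K}: from $d(x^{*},Tx^{*})\leq d(x^{*},x_{n+1})+d(Tx_n,Tx^{*})$ and $d(Tx_n,Tx^{*})\leq\beta[d(x_n,x_{n+1})+d(x^{*},Tx^{*})]$, absorbing again gives
\begin{equation*}
(1-\beta)\,d(x^{*},Tx^{*})\leq d(x^{*},x_{n+1})+\beta\,d(x_n,x_{n+1}).
\end{equation*}
Both terms on the right can be made $\ll c$ for large $n$ (the first by $x_n\stackrel{cone}{\rightarrow}x^{*}$, the second by $d(x_n,x_{n+1})\leq h^nd(x_0,x_1)\to0$), so $d(x^{*},Tx^{*})\ll c$ for every $0\ll c$. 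Fixing one $c_0\in intP$ and noting $d(x^{*},Tx^{*})\ll c_0/k$, i.e. $c_0/k-d(x^{*},Tx^{*})\in P$, then letting $k\to\infty$ and using that $P$ is closed, I obtain $-d(x^{*},Tx^{*})\in P$; combined with $d(x^{*},Tx^{*})\geq0$ this gives $d(x^{*},Tx^{*})=0$, i.e. $Tx^{*}=x^{*}$. Uniqueness is immediate, for if $x^{*},y^{*}$ are fixed points then \eqref{K} yields $d(x^{*},y^{*})\leq\beta[d(x^{*},Tx^{*})+d(y^{*},Ty^{*})]=0$, whence $x^{*}=y^{*}$.

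The main obstacle is not the algebra but the two cone-topological steps that stand in for norm estimates once normality is dropped: turning the order bound on $d(x_n,x_m)$ into the $\ll c$ Cauchy condition, and deducing $d(x^{*},Tx^{*})=0$ from $d(x^{*},Tx^{*})\ll c$ for all $0\ll c$. Both rely only on $intP$ being open, $P$ being closed, and the relation $intP+P\subseteq intP$ recalled in the preliminaries, so the absence of a norm causes no real difficulty.
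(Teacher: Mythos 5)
Your proposal is correct and follows essentially the same route as the paper's proof: the same Picard iteration with the absorbed Kannan estimate $d(x_n,x_{n+1})\leq \frac{\beta}{1-\beta}d(x_{n-1},x_n)$, the same geometric-series Cauchy bound converted to the $\ll c$ condition (you invoke openness of $intP$ directly where the paper picks a seminorm $q\in S$ and $\delta>0$ with $q(b)<\delta\Rightarrow b\ll c$, which is the same fact), the same fixed-point verification by splitting $d(x^*,Tx^*)$ through $x_{n+1}=Tx_n$, and the same closing argument that $d(x^*,Tx^*)\ll c_0/k$ for all $k$ plus closedness of $P$ and $P\cap(-P)=\{0\}$ force $d(x^*,Tx^*)=0$. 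No gaps; the differences are purely cosmetic.
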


\textbf{Proof}
Let $x\in X$.
\begin{displaymath}
\begin{array}{r c l}
d(Tx,T^2x)&\leqslant& \beta [d(x,Tx)+d(Tx,T^2x)]\\[3mm]
i.e.,~~d(Tx,T^2x)&\leqslant& \frac{\beta}{1-\beta}d(x,Tx)
\end{array}
\end{displaymath}
Again,
\begin{displaymath}
\begin{array}{r c l}
d(T^2x,T^3x)&\leqslant& \beta [d(Tx,T^2x)+d(T^2x,T^3x)]\\[3mm]
i.e.,~~d(T^2x,T^3x)&\leqslant& \frac{\beta}{1-\beta}d(Tx,T^2x)\leqslant {\left(\frac{\beta}{1-\beta}\right)}^2d(x,Tx)
\end{array}
\end{displaymath}
Similarly,
\begin{displaymath}
d(T^3x,T^4x)\leqslant {\left(\frac{\beta}{1-\beta}\right)}^3d(x,Tx)
\end{displaymath}
Thus in general, if $n$ is a positive integer, then
\begin{equation}
d(T^nx,T^{n+1}x)\leqslant r^nd(x,Tx)
\end{equation}
where $\displaystyle r=\frac{\beta}{1-\beta}$. Since $\displaystyle 0<\beta< \frac{1}{2}$, clearly $0<r<1$.\\

Now, our aim is to show that $\{T^nx\}$ is a Cauchy sequence.
Assume $m\in \mathbb{N}$ and $n>m$, then we have
\begin{displaymath}
\begin{array}{r c l}
d(T^nx,T^mx)&\leqslant& d(T^nx,T^{n-1}x)+d(T^{n-1}x,T^{n-2}x)+\dotso+d(T^{m+1}x,T^mx)\\[3mm]
&\leqslant& (r^{n-1}+r^{n-2}+\dotso+r^m)d(x,Tx)\\[3mm]
&\leqslant& \frac{r^m}{1-r}d(x,Tx)
\end{array}
\end{displaymath}

Let $0\ll c$ be given. Find $\delta >0$ and $q \in S$ such that $q(b)  < \delta$ implies $b\ll c$.\\
Now, since
\begin{displaymath}
\frac{r^m}{1-r}d(x,Tx) \to 0 \hspace{2 mm} as \hspace{2 mm} m\to \infty
\end{displaymath}
then find $n_0$ such that :
\begin{displaymath}
 q( {\frac{r^m}{1-r}d(x,Tx)}) < \delta  \hspace{3 mm} \forall m\geqslant n_0
\end{displaymath}
Hence, $\displaystyle \frac{r^m}{1-r}d(x,Tx)\ll c$, $\forall m\geqslant n_0$.\\

Thus, $d(T^nx,T^mx)\ll c$ for $n>m \geq n_0$. Therefore, $\{T^nx\}$ is a Cauchy sequence in $(X,d)$. Since $(X,d)$ is $T$-orbitally complete, there exists $x^*\in X$ such that $T^nx \to x^*$.\\
Choose a natural number $n_1$ such that $d(T^{n-1}x,T^nx)\ll \frac{c}{2}$ and  $\displaystyle d(T^nx,x^*)\ll \frac{c}{2}$, for all $n\geqslant n_1$. Hence, for $n>n_1$ we have
\begin{displaymath}
\begin{array}{r c l}
d(Tx^*,x^*)&\leqslant& d(TT^{n-1}x,Tx^*)+d(T^nx,x^*)\\[3mm]
&\leqslant& \beta[d(T^{n-1}x,T^nx)+d(x^*,Tx^*)]+d(T^nx,x^*)\\[3mm]
&=&\beta d(T^{n-1}x,T^nx)+\beta d(x^*,Tx^*)+d(T^nx,x^*)\\[3mm]
&\leqslant& \frac{c}{2}+ \beta d(x^*,Tx^*)+\frac{c}{2}
\end{array}
\end{displaymath}

So,
\begin{displaymath}
(1-\beta)d(Tx^*,x^*)\ll c
\end{displaymath}
Hence,
\begin{displaymath}
(1-\beta)d(Tx^*,x^*)\ll \frac{c}{m}\hspace{3 mm}\forall m\geqslant1
\end{displaymath}

Hence, $\displaystyle \frac{c}{m}-(1-\beta)d(Tx^*,x^*)\in P$, for all $m\geqslant1$. Since $\displaystyle \frac{c}{m}\to 0$ as $m\to \infty$ and $P$ is closed;
\begin{displaymath}
-(1-\beta)d(Tx^*,x^*)\in P\hspace{3 mm}and \hspace{3 mm}(1-\beta)d(Tx^*,x^*)\in P
\end{displaymath}
from the cone properties, $(1-\beta)d(Tx^*,x^*)=0$. Since $(1-\beta)$ never be equal to zero, then $d(Tx^*,x^*)=0$. Thus $Tx^*=x^*$.\\

Now, if $y^*$ is another fixed point of $T$ then $Tx^*=x^*$ and $Ty^*=y^*$. Then, we have
\begin{displaymath}
0\leqslant d(x^*,y^*)=d(Tx^*,Ty^*)\leqslant \beta d(x^*,Tx^*)+\beta d(y^*,Ty^*)=0
\end{displaymath}
Hence, $d(x^*,y^*)=0$ and so $x^*=y^*$. Therefore, the fixed point of $T$ is unique.\\

Now, we prove Kannan's fixed point theorem in R-TVS-CMS.

\begin{theorem} \label{AS}
Let $T:X\rightarrow X$ be a mapping where $(X,d)$ is a  $T$-orbitally complete R-TVS-CMS such that
\begin{equation} \label{Ka}
d(Tx,Ty)\leqslant \beta [d(x,Tx)+d(y,Ty)]
\end{equation}
holds for all $x,y\in X$ and  $\displaystyle 0<\beta< \frac{1}{2}$. Then, $T$ has a unique fixed point in $X$.
\end{theorem}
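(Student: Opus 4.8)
The plan is to follow the two routes advertised in the abstract. In both I write $x_n=T^nx$ and $D=d(x,Tx)$, and I begin exactly as in Theorem~\ref{SA}: applying \eqref{Ka} to the pair $(T^{n-1}x,T^nx)$, absorbing the $d(T^nx,T^{n+1}x)$ term and dividing by the positive scalar $1-\beta$ (legitimate since $\beta<\tfrac12$, using $\lambda P\subseteq P$), gives $d(x_n,x_{n+1})\leq r^nD$ with $r=\beta/(1-\beta)\in(0,1)$. This step uses only \eqref{Ka} and the cone axioms, so it transfers verbatim. The genuinely new issue is that $(RC3)$ lets me split $d(x_n,x_{n+p})$ only through two intermediate points that are distinct from each other and from the endpoints, so I first quarantine the non-injective orbit: if $x_p=x_{p+\ell}$ for some $\ell\geq1$, then the sequence is periodic from index $p$, whence $d(x_p,x_{p+1})=d(x_{p+m\ell},x_{p+m\ell+1})\leq r^{p+m\ell}D\to0$, forcing $x_p=Tx_p$ and producing a fixed point at once. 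Hence I may assume all $x_n$ are pairwise distinct, so every use of $(RC3)$ below is valid.

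Next I would show $\{x_n\}$ is Cauchy, splitting on the parity of the gap $p$. For odd $p=2k+1$, repeatedly applying $d(x_n,x_{n+2k+1})\leq d(x_n,x_{n+1})+d(x_{n+1},x_{n+2})+d(x_{n+2},x_{n+2k+1})$ peels off two consecutive terms each time and reduces everything to consecutive distances, bounded by $\sum_{i\geq0}r^{n+i}D\leq r^nD/(1-r)$. The even gaps need one extra ingredient, which the Kannan condition supplies for free: applying \eqref{Ka} to $(x_{n-1},x_{n+1})$ yields $d(x_n,x_{n+2})\leq\beta(r^{n-1}+r^{n+1})D$, and I feed this single bridge into $d(x_n,x_{n+2k})\leq d(x_n,x_{n+2})+d(x_{n+2},x_{n+3})+d(x_{n+3},x_{n+2k})$, whose last term has odd gap. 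In every case $d(x_n,x_{n+p})\leq r^{n-1}w$ for a fixed $w\in P$ independent of $p$; then, given $0\ll c$, I pick $q\in S$ and $\delta>0$ with $q(b)<\delta\Rightarrow b\ll c$ and choose $n$ so large that $q(r^{n-1}w)=r^{n-1}q(w)<\delta$, exactly as in Theorem~\ref{SA}, giving $d(x_n,x_{n+p})\ll c$.

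By $T$-orbital completeness there is $x^*$ with $x_n\to x^*$. To prove $Tx^*=x^*$ I bound, via $(RC3)$, $d(Tx^*,x^*)\leq d(Tx^*,x_{n+1})+d(x_{n+1},x_{n+2})+d(x_{n+2},x^*)$, taking $n$ large enough that the indices $n+1,n+2$ avoid the at most one orbit point equal to $x^*$ (and the at most one equal to $Tx^*$, should it differ from $x^*$), so that $(RC3)$ is applicable; the first term is controlled by \eqref{Ka} as $d(Tx^*,Tx_n)\leq\beta[d(x^*,Tx^*)+d(x_n,x_{n+1})]$, and the other two tend to $0$. Absorbing $\beta d(x^*,Tx^*)$ gives $(1-\beta)d(Tx^*,x^*)\ll c$ for every $0\ll c$, and the closedness-of-$P$ argument from Theorem~\ref{SA} forces $(1-\beta)d(Tx^*,x^*)=0$, hence $Tx^*=x^*$. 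Uniqueness needs no rectangular inequality: two fixed points give $d(x^*,y^*)=d(Tx^*,Ty^*)\leq\beta[d(x^*,Tx^*)+d(y^*,Ty^*)]=0$.

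For the scalarization route I set $d_p=\xi_e\circ p$ and note that, by monotonicity $(vi)$, positive homogeneity $(v)$ and subadditivity $(vii)$ of $\xi_e$ in Lemma~\ref{lemma_scalarization}, $d_p$ inherits both the rectangular inequality and the Kannan estimate, so $(X,d_p)$ is an ordinary rectangular metric space on which $T$ is a Kannan map; pushing convergence and Cauchyness through $\xi_e$ as in Lemma~\ref{lemma_eq_statements} makes $(X,d_p)$ $T$-orbitally complete, and the classical R-MS theorem of \cite{Das} then yields the unique fixed point, which is a fixed point of $T$ irrespective of the chosen metric. The main obstacle in both routes is the same bookkeeping forced by $(RC3)$: ensuring that the intermediate points in every split are genuinely distinct, which is precisely why the non-injective orbit must be isolated at the start.
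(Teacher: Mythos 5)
Your proposal is correct and follows essentially the same two routes as the paper: your direct argument (the consecutive-distance estimate $d(T^nx,T^{n+1}x)\leqslant r^n d(x,Tx)$, the parity split using the Kannan bound on $d(T^nx,T^{n+2}x)$ as the bridge for even gaps, orbital completeness, the rectangular split plus absorption and closedness of $P$ to get $Tx^*=x^*$, and the separate treatment of a periodic orbit) is exactly the paper's Case I/Case II proof of Theorem \ref{AS}, merely with the periodic case quarantined at the start instead of appearing as Case II at the end, and your scalarization route coincides with the paper's Section 3 (Theorem \ref{rectangular}, Lemma \ref{TH}, and the theorem of \cite{Das}). Your explicit justification of the paper's ``without loss of generality'' step --- choosing $n$ large enough that $x_{n+1},x_{n+2}$ avoid the at most one orbit point equal to $x^*$ and the at most one equal to $Tx^*$ --- is a slightly more careful rendering of the same idea, not a different argument.
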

\begin{proof}
 As in the proof of Theorem \ref{SA}, for a fixed  $x\in X$, we have for all $n \in \mathbb{N}$

\begin{equation}
d(T^nx,T^{n+1}x)\leqslant r^nd(x,Tx) \label{d}
\end{equation}
where $\displaystyle r=\frac{\beta}{1-\beta}$. Since $\displaystyle 0<\beta< \frac{1}{2}$, clearly $0<r<1$.\\
Since we are not able to use the triangle inequality, we divide the proof into two cases so that we can make use of the rectangle inequality.

\textbf{Case I}:
First assume that $T^mx\neq T^nx$ for $m,n\in N,m\neq n$. Then, for $n \in N$. Clearly,
\begin{displaymath}
\begin{array}{r c l}
d(T^nx,T^{n+1}x)&\leqslant& r^nd(x,Tx)<\left(\frac{r^n}{1-r}\right)d(x,Tx)\\[3mm]
\texttt{and}~~
d(T^nx,T^{n+2}x)&\leqslant& \beta [d(T^{n-1}x,T^nx)+d(T^{n+1}x,T^{n+2}x)]\\[3mm]
&\leqslant&\beta\left[{\left(\frac{\beta}{1-\beta}\right)}^{n-1}d(x,Tx)+{\left(\frac{\beta}{1-\beta}\right)}^{n+1}d(x,Tx)\right] \textbf{\hspace{1cm}(by\hspace{2mm}\ref{d})}\\[4mm]
&\leqslant& {\left(\frac{\beta}{1-\beta}\right)}^nd(x,Tx)+{\left(\frac{\beta}{1-\beta}\right)}^{n+1}d(x,Tx)\\[4mm]
&\leqslant& \left(\frac{r^n}{1-r}\right)d(x,Tx)
\end{array}
\end{displaymath}
since $\displaystyle 0<\beta< \frac{1}{2}$, $\displaystyle \beta \leqslant \frac{\beta}{1-\beta}$.\\

Now if $m>2$ is odd then writing $m=2k+1$, $k\geqslant 1$ and using the fact that $T^px\neq T^rx$ for $p,r\in N$, $p\neq r$ we can easily show that by the rectangular inequality
\begin{equation}
\begin{array}{r c l}
d(T^nx,T^{n+m}x)&\leqslant& d(T^nx,T^{n+1}x)+d(T^{n+1}x,T^{n+2}x)+\dotso+d(T^{n+2k}x,T^{n+2k+1}x)\\[3mm]
&\leqslant& r^nd(x,Tx)+r^{n+1}d(x,Tx)+\dotso+r^{n+2k}d(x,Tx)  \textbf{\hspace{1cm}(by\hspace{2mm}\ref{d})}\\[3mm]
&\leqslant& \left(\frac{r^n}{1-r}\right)d(x,Tx) \label{e}
\end{array}
\end{equation}
Again if $m>2$ is even then writing $m=2k$, $k\geqslant2$ and using the same arguments as before we can get by the rectangular inequality
\begin{displaymath}
\begin{array}{r c l}
d(T^nx,T^{n+m}x)&\leqslant& d(T^nx,T^{n+2}x)+d(T^{n+2}x,T^{n+3}x)+\dotso+d(T^{n+2k-1}x,T^{n+2k}x)\\[3mm]
&\leqslant&[r^n+r^{n+1}+r^{n+3}+\dotso+r^{n+2k-1}]d(x,Tx) \hspace{1cm}\textbf{(by\hspace{2mm}\ref{d},\hspace{2mm}\ref{e})}
\end{array}
\end{displaymath}
Thus combining all the cases we have
\begin{equation}
d(T^nx,T^{n+m}x)\leqslant \left(\frac{r^n}{1-r}\right)d(x,Tx)\label{f}
\end{equation}
for all $m,n\in N$. Since $0<r<1$, $r^n \to 0$ as $n\to\infty $ and so by  following a similar argument as in the proof of Theorem \ref{SA}, $\{T^nx\}$ is a Cauchy sequence. Since $X$ is $T$-orbitally complete, $\{T^nx\}$ is convergent. Let $u$ is defined as:
\begin{equation}
u=\lim_{n \to \infty}{T^nx}
\end{equation}
We shall now show that $Tu=u$. Without any loss of generality we assume that $T^nx\neq u$ and $T^nx\neq Tu$ for any $n\in N$. Then by (\ref{Ka}) and the rectangular inequality, we obtain
\begin{displaymath}
\begin{array}{r c l}
d(u,Tu)&\leqslant& d(u,T^nx)+d(T^nx,T^{n+1}x)+d(T^{n+1}x,Tu)\\[3mm]
&\leqslant& d(u,T^nx)+d(T^nx,T^{n+1}x)+\beta [d(T^nx,T^{n+1}x)+d(u,Tu)]\\[3mm]
i.e., ~~d(u,Tu)&\leqslant& \frac{1}{1-\beta}[d(u,T^nx)+(1+\beta)d(T^nx,T^{n+1}x)]
\end{array}
\end{displaymath}

Since $T^nx\rightarrow u$ and $\{T^nx\}$ is Cauchy then we obtain $0\leq d(u,Tu)\ll c$ for all $c\gg 0$. Then closeness of the cone $P$ implies that $u=Tu$.\\

\textbf{Case II}:
Let $T^mx=T^nx$ for some $m,n\in N$, $m\neq n$. Let $m>n$. Then $T^{m-n}(T^nx)=T^nx$ i.e.,$T^ky=y$ where $k=m-n,y=T^nx$. Now if $k>1$
\begin{displaymath}
d(y,Ty)=d(T^ky,T^{k+1}y)\leqslant {\left(\frac{\beta}{1-\beta}\right)}^kd(y,Ty) \hspace{1cm}\textbf{(by\hspace{2 mm}\ref{d})}
\end{displaymath}
Since $\displaystyle 0<\frac{\beta}{1-\beta}<1$, $d(y,Ty)=0$ i.e., $Ty=y$. That the fixed point of $T$ is unique easily follows from (\ref{Ka}). This completes the proof of the theorem.
\end{proof}
Theorem \ref{AS} above generalizes the results obtained in \cite{Beg}, where Kannan's fixed point theorem was proved in CMS and under the normality assumption. However, the proofs in this article are done without any normality type assumption.

\section{ The nonlinear scalarization and Kannan's fixed point theorem}
In this section, we use the nonlinear scalarization function to obtain a simpler shorter proof for the Kannan's fixed point theorem in R-TVS-CMS.
\begin{theorem} \label{rectangular}
Let $(X,p)$ be a rectangular TVS-cone metric space. Then  $(X,d_p)$, where $d_p:=\xi_e \circ p$, is a rectangular metric space (R-MS).
\end{theorem}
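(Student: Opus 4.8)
The plan is to verify that $d_p := \xi_e \circ p$ fulfils the three axioms of a rectangular metric, namely nonnegativity together with $d_p(x,y)=0\iff x=y$, symmetry, and the rectangular inequality. Since moving from a TVS-CMS to an R-TVS-CMS only replaces the triangle inequality $(M4)$ by the rectangular inequality $(RC3)$, the first two axioms can be disposed of exactly as in the proof of Lemma \ref{lemma_usual_metric}; the real content is transporting $(RC3)$ through the scalarization $\xi_e$, using its monotonicity and subadditivity.

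First I would treat nonnegativity and the identity of indiscernibles. From $(RC1)$ we have $p(x,y)\in P$, and since $P\cap(-\operatorname{int}P)=\emptyset$ (a consequence of $P\cap(-P)=\{0\}$ and $0\notin\operatorname{int}P$), part $(iii)$ of Lemma \ref{lemma_scalarization} with $t=0$ gives $\xi_e(p(x,y))\ge 0$. If $x=y$ then $p(x,y)=0$, and positive homogeneity (part $(v)$) yields $\xi_e(0)=0$, so $d_p(x,x)=0$. Conversely, if $d_p(x,y)=\xi_e(p(x,y))=0$, then part $(i)$ with $t=0$ gives $p(x,y)\in -P$; combined with $p(x,y)\in P$ this forces $p(x,y)\in P\cap(-P)=\{0\}$, whence $x=y$ by $(RC1)$. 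Symmetry is immediate: $d_p(x,y)=\xi_e(p(x,y))=\xi_e(p(y,x))=d_p(y,x)$ by $(RC2)$.

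For the rectangular inequality, fix $x,y\in X$ and distinct $z,w$ each different from $x$ and $y$. By $(RC3)$ we have $p(x,z)\le p(x,y)+p(y,w)+p(w,z)$, that is, $p(x,y)+p(y,w)+p(w,z)\in p(x,z)+P$. Monotonicity of the scalarization (part $(vi)$) then gives
\[
\xi_e(p(x,z))\le \xi_e\bigl(p(x,y)+p(y,w)+p(w,z)\bigr),
\]
and two applications of subadditivity (part $(vii)$) bound the right-hand side by $\xi_e(p(x,y))+\xi_e(p(y,w))+\xi_e(p(w,z))$. Reading this as $d_p(x,z)\le d_p(x,y)+d_p(y,w)+d_p(w,z)$ completes the verification.

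I do not expect a genuine obstacle here, as every step is a direct application of the recorded properties of $\xi_e$; the argument is essentially the rectangular analogue of Lemma \ref{lemma_usual_metric}. The only points requiring a little care are the implication $d_p(x,y)=0\Rightarrow x=y$, where one must invoke the cone property $P\cap(-P)=\{0\}$ rather than any norm estimate, and the bookkeeping in applying $(vii)$ twice to a triple sum. Crucially, none of these steps uses the normality of $P$, which is exactly why the conclusion holds in the general locally convex setting.
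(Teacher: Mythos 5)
Your proposal is correct and follows essentially the same route as the paper's proof: both verify the three axioms directly via Lemma \ref{lemma_scalarization} (parts $(i)$, $(vi)$, $(vii)$), the cone property $P\cap(-P)=\{0\}$, and $(RC1)$--$(RC3)$, with the rectangular inequality obtained from monotonicity followed by two applications of subadditivity. Your version merely spells out the details (and, incidentally, correctly cites $(RC1)$ where the paper's terse proof misattributes the final step to $(RC2)$).
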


\begin{proof}
By $RC1$, the definition of $\xi_e$ and that $P\cap -P=\{0\}$ we have $d_p(x,y)\geq 0$ for all $x, y \in X$. By $RC2$, $d_p(x,y)=d_p(y,x)$ for all $x, y \in X$. If $x=y$, then by $RC1$ $d_p(x,y)=\xi_e(0)=0$. Conversely, if $d_p(x,y)=0$, then by Lemma \ref{lemma_scalarization}, $RC1$ and that $P\cap -P=\{0\}$, we conclude that $p(x,y)=0$ and hence by $RC2$, $x=y$. Finally the rectangular inequality follows by Lemma \ref{lemma_scalarization} $(vi)$, $(vii)$ and $RC3$. \end{proof}

\begin{lemma} \label{TH}
Let $(X,p)$ be a R-TVS-CMS, $x\in X$ and $\{x_n\}_{n=1}^{\infty}$ a sequence in $X$.
Set $d_p=\xi_e \circ p$. Then the following statements hold:
\begin{itemize}
\item[($i$)]  $\{x_n\}_{n=1}^{\infty}$ converges to $x$ in the R-TVS-CMS $(X,p)$  if and only if
$d_p(x_n,x)\rightarrow 0$ as $n\rightarrow \infty,$
\item[($ii$)]  $\{x_n\}_{n=1}^{\infty}$ is Cauchy sequence in the R-TVS-CMS $(X,p)$  if and only if
$\{x_n\}_{n=1}^{\infty}$ is a Cauchy sequence in the rectangular metric space $(X,d_p)$,
\item[($iii$)]  $(X,p)$ is a complete R-TVS-CMS  if and only if $(X,d_p)$
is a complete rectangular metric space.
\end{itemize}
\end{lemma}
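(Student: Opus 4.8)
The plan is to establish the two equivalences $(i)$ and $(ii)$ directly from the scalarization identities of Lemma \ref{lemma_scalarization}, and then to deduce $(iii)$ as a formal consequence of $(i)$ and $(ii)$. Note first that $d_p$ is genuinely a rectangular metric by Theorem \ref{rectangular}, so the notions ``Cauchy'' and ``convergent'' in $(X,d_p)$ make sense; and since the convergence and Cauchy conditions in both the cone and the scalar setting are indexed pointwise in $n$ (and $m$), the failure of the ordinary triangle inequality plays no role here, so the rectangular structure causes no extra difficulty in translating between the two formulations. The forward implications (cone $\Rightarrow$ scalar) are essentially the arguments already recorded for TVS-CMS in Lemma \ref{lemma_eq_statements}: given $\varepsilon>0$ one applies the hypothesis with $c=\varepsilon e \gg 0$ to get $p(x_n,x)\ll \varepsilon e$, and Lemma \ref{lemma_scalarization}$(iv)$ then yields $d_p(x_n,x)=\xi_e(p(x_n,x))<\varepsilon$; the Cauchy case is identical with $p(x_n,x_{n+m})$ in place of $p(x_n,x)$.

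The substance of the lemma is the reverse implications (scalar $\Rightarrow$ cone), and the key device is to convert an arbitrary interior element $c\gg 0$ into a scalar threshold along $e$. Concretely, for a fixed $c\gg 0$ I would first choose $\delta>0$ with $\delta e \ll c$; this is possible because $c\in intP$ and the map $\lambda\mapsto c-\lambda e$ is continuous with value $c\in intP$ at $\lambda=0$, so $c-\delta e\in intP$ for all sufficiently small $\delta>0$. For $(i)$, since $d_p(x_n,x)=\xi_e(p(x_n,x))\to 0$ there is $M$ with $\xi_e(p(x_n,x))<\delta$ for all $n\geq M$; Lemma \ref{lemma_scalarization}$(iv)$ then gives $p(x_n,x)\in \delta e-intP$, i.e.\ $p(x_n,x)\ll \delta e$, and combining this with $\delta e\ll c$ via $intP+intP\subseteq intP$ yields $c-p(x_n,x)\in intP$, that is $p(x_n,x)\ll c$. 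This is exactly cone convergence to $x$. The reverse direction of $(ii)$ runs along the same lines: using the scalar Cauchy condition one finds $M$ with $\xi_e(p(x_n,x_{n+m}))<\delta$ for all $n\geq M$ and all $m$, whence $p(x_n,x_{n+m})\ll \delta e \ll c$.

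Finally, $(iii)$ follows formally. For completeness of $(X,d_p)$ from that of $(X,p)$, take a $d_p$-Cauchy sequence, transport it to a cone-Cauchy sequence by $(ii)$, obtain a cone limit by completeness of $(X,p)$, and transport the convergence back by $(i)$; the converse implication is symmetric. The only real obstacle is the one addressed in the middle paragraph: because a general $c\gg 0$ need not be a scalar multiple of $e$, Lemma \ref{lemma_scalarization}$(iv)$ cannot be applied to $c$ directly, and one must first interpose the threshold $\delta e$. Once the elementary fact that $\delta e\ll c$ holds for small $\delta$ is in hand, a consequence of $intP$ being open together with $intP+intP\subseteq intP$, the whole lemma reduces to bookkeeping with the scalarization inequalities, and crucially requires no normality assumption on $P$.
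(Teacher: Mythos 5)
Your proposal is correct and takes essentially the same route as the paper's proof: the forward implications apply the cone condition with $c=\varepsilon e$ and Lemma \ref{lemma_scalarization}$(iv)$, the reverse implications interpose a threshold $\delta e\ll c$ before applying Lemma \ref{lemma_scalarization}$(iv)$, and $(iii)$ is deduced formally from $(i)$ and $(ii)$. The only (immaterial) difference is how the existence of $\delta$ with $\delta e\ll c$ is justified: the paper picks a seminorm $q\in S$ and $\delta>0$ with $q(b)<\delta\Rightarrow b\ll c$ and then uses $e/n\rightarrow 0$, while you argue via continuity of $\lambda\mapsto c-\lambda e$ and openness of $intP$; both are standard and equally valid.
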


\begin{proof}

 Applying Theorem \ref{rectangular}, $d_p$ is a rectangular metric on $X$. Regarding (i) First, assume $\{x_n\}$ converges to $x$ in the R-TVS-CMS $(X,p)$ and let $\epsilon > 0$ be given. Find $n_0$ such that $p(x_n,x)\ll \epsilon e$ for all $n>n_0$. Therefore, by Lemma \ref{lemma_scalarization} (iv), $d_p(x_n,x)=\xi_e \circ p(x_n,x)<\epsilon $, for all $n>n_0$. Conversely, we prove that if $x_n\rightarrow x$ in $(X,d_p)$ then $x_n\rightarrow x$ in the R-TVS-CMS $(X,p)$. To this end, let $c>>0$ be given, then find $q \in S$ and $\delta >0$ such that $q(b)<\delta$ implies that $b<< c$. Since $\frac{e}{n}\rightarrow 0$ in $(E,S)$ find $\epsilon = \frac{1}{n_0}$ such that $\epsilon q(e)=q(\epsilon e)<\delta $ and hence $\epsilon e << c$. Now, find $n_0$ such that $d_p (x_n,x)=\xi_e\circ p (x_n,x)< \epsilon$ for all $n\geq n_0$. Hence, by Lemma \ref{lemma_scalarization} (iv) $p (x_n,x)<< \epsilon e<< c$ for all $n\geq n_0$.  The proof of  (ii) is similar to the proof of (i). Finally,  (iii)   is immediate from (i) and (ii).
\end{proof}
Now the proof of Theorem \ref{SA} can be achieved by Lemma \ref{TH}, Theorem \ref{rectangular} and by Kannan's fixed point theorem (see \cite{Das}) applied to the R-MS  $(X,d_p)$.

%End of refernces
\end{document}